\providecommand{\U}[1]{\protect\rule{.1in}{.1in}}
\tikzset{>=Triangle}
\newtheorem{theorem}{Theorem}[section]
\newtheorem{corollary}[theorem]{Corollary}
\newtheorem{example}[theorem]{Example}
\newtheorem{definition}[theorem]{Definition}
\numberwithin{equation}{section}
\pgfplotsset{compat=1.17}
\begin{document}
\title[Pointwise linear separation property and infinite pointwise dense lineability]{Pointwise linear separation property and infinite pointwise dense lineability}
\author[A. Raposo Jr.]{Anselmo Raposo Jr.}
\address{Departamento de Matem\'{a}tica \\
Universidade Federal do Maranh\~{a}o \\
65085-580 - S\~{a}o Lu\'{\i}s, Brazil.}
\email{anselmo.junior@ufma.br}
\author[G. Ribeiro]{Geivison Ribeiro}
\address{Departamento de Matem\'{a}tica \\
Universidade Federal da Para\'{\i}ba \\
58.051-900 - Jo\~{a}o Pessoa, Brazil.}
\email{geivison.ribeiro@academico.ufpb.br}
\subjclass[2020]{15A03, 46B87, 46A16}
\keywords{Lineability; spaceability}

\begin{abstract}
In this note we generalize a criterion within the concept of infinite dense
lineability due to Calder\'{o}n-Moreno, Gerlach-Mena and Prado-Bassas. We also
introduce and explore some "local" notions of lineability.

\end{abstract}
\maketitle

\section{Introduction}

In 1872, K. Weierstrass shows that, if $0<a<1$, $b$ is an odd positive integer
and $ab>1+3\pi/2$, then the function $f_{a,b}\colon\left[  0,1\right]
\rightarrow\mathbb{R}$ defined by%
\[
f_{a,b}\left(  x\right)  =%
{\textstyle\sum\limits_{n=0}^{\infty}}
a^{n}\cos\left(  b^{n}\pi x\right)
\]
is continuous over the interval $\left[  0,1\right]  $, but it is non
differentiable in any of its points. As far as we know, this is the first
published example of a class of functions with such properties, in which case
the $f_{a,b}$ functions are then nicknamed Weierstrass monster functions.
Later, in 1966 (see \cite{Gurariy}), V. Gurariy constructs an
infinite-dimensional vector subspace consisting, up to the zero vector, of
continuous functions that are not differentiable anywhere. In 1995 L.
Rodr\'{\i}guez-Piazza proves in \cite{Piazza} that every separable Banach
space is isometric to a space of continuous functions that are not
differentiable anywhere and, in 2005, R. Aron, V. Gurariy and J.
Seoane-Sep\'{u}lveda (see \cite{AGS}) investigate similar problems in other
contexts, initiating the branch of research that we now know as
\textit{lineability}. For more details, we recommend reading \cite{AGPS}.

Roughly speaking, lineability is the branch of mathematics that is intended to
look for large linear structures in exotic subsets of vector spaces. More
precisely, if $V$ is a vector space, $M$ is a nonempty subset of $V$, and
$\alpha$ is a cardinal number, we say that $M$ is $\alpha$\textit{-lineable}
if there exists a subspace $W$ of $V$ such that%
\[
\dim\left(  W\right)  =\alpha\text{ \ \ \ \ and \ \ \ \ }W\subset
M\cup\left\{  0\right\}  \text{.}%
\]
When $V$ is topological vector space, we say that $M$ is $\alpha
$\textit{-spaceable} ($\alpha$\textit{-dense lineable} or \textit{densely}
$\alpha$\textit{-lineable}) if there is a subspace $W$ of $V$ such that%
\[
W\text{ is closed (dense), \ \ \ \ }\dim\left(  W\right)  =\alpha\text{
\ \ \ \ and \ \ \ \ }W\subset M\cup\left\{  0\right\}  \text{.}%
\]

With the development of the theory, it was verified that positive results of
lineability are quite common and, on the other hand, techniques and general
criteria do not appear with the same frequency. In this perspective, some more
restrictive notions of lineability arise, giving the problems interesting
geometric contours and adding information of a qualitative nature.

Recently, inspired by the notions of lineability presented by V. F\'{a}varo,
D. Pellegrino and D. Tom\'{a}z in \cite{FPT}, D. Pellegrino and A. Raposo Jr.
introduce the following pointwise (or directional) notion of lineability: a
nonempty subset $M$ of a vector space $V$ is said to be \textit{pointwise
}$\alpha$\textit{-lineable} if, for every $x\in M$, there exists a subspace
$W_{x}$ of $V$ such that%
\[
x\in W_{x}\text{, \ \ \ \ }\dim\left(  W_{x}\right)  =\alpha\text{ \ \ \ \ and
\ \ \ \ }W_{x}\subset M\cup\left\{  0\right\}  \text{.}%
\]
When $V$ is topological vector space and, for each $x\in M$, the subspace
$W_{x}$ can be chosen closed (dense), we say that $M$ is \textit{pointwise
}$\alpha$\textit{-spaceable} ($\alpha$\textit{-dense lineable }or\textit{
densely }$\alpha$\textit{-lineable}).

\section{A new approach: Linear separation theorems}

Since pointwise $\alpha$-lineability in a subset $M$ of a vector space $V$
concerns the existence of an $\alpha$-dimensional vector subspace $W_{x}$ in
$M\cup\left\{  0\right\}  $ for each vector $x\in M\cup\left\{  0\right\}  $,
we can consider the family $F_{\alpha}$ of all $\alpha$-dimensional vector
subspaces of $V$ in $M\cup\left\{  0\right\}  $. Hence, we have the following
natural question: we can ask whether for each sequence linearly independent
$\left(  x_{n}\right)  _{n=1}^{\infty}$ of elements of $V$ in $M$, there
exists a family $\mathcal{W}_{\alpha}=\left\{  W_{n}:n\in\mathbb{N}\right\}  $
of vector subspaces of $V$ such that:

\begin{enumerate}
\item[$\left(  i\right)  $] for each $n\in\mathbb{N}$, $\dim\left(
W_{n}\right)  =\alpha$,

\item[$\left(  ii\right)  $] for each $n\in\mathbb{N}$, $x_{n}\in W_{n}\subset
M\cup\left\{  0\right\}  $ and

\item[$\left(  iii\right)  $] $W_{m}\cap W_{n}=\left\{  0\right\}  $ whenever
$m$ and $n$ are distinct positive integers.
\end{enumerate}

In the same way as "pointwise lineability", this idea can be described as a
local version of the $\alpha$-lineability. Inspired by the separation property
to Hausdorff spaces we also introduce the notion of set with the pointwise
linear separation property and in particular, we provide sufficient and
necessary conditions for a set in a topological vector space to satisfy such a property.

\bigskip

\begin{definition}
Let $\lambda,\alpha$ be two cardinal numbers with $0<\lambda\leq\alpha$, $M$
be a nonempty subset of a topological vector space $V\neq\left\{  0\right\}  $
and $x=\left(  x_{i}\right)  _{i\in\lambda}$ be a set linearly independent of
elements of $V$ in $M$ with cardinality $\lambda$. We say that $M$ is $\left[
\left(  x_{i}\right)  _{i\in\lambda},\alpha\right]  $-lineable if it is
$\alpha$-lineable, and there exists a family $\mathcal{W}_{\alpha}=\left\{
W_{i}\right\}  _{i\in\lambda}$of vector subspaces of $V$ such that:

\begin{enumerate}
\item[$\left(  i\right)  $] for each $i\in\mathbb{\lambda}$, $\dim\left(
W_{i}\right)  =\alpha$,

\item[$\left(  ii\right)  $] for each $i\in\mathbb{\lambda}$, $x_{i}\in
W_{i}\subset M\cup\left\{  0\right\}  $,

\item[$\left(  iii\right)  $] $W_{i}\cap W_{j}=\left\{  0\right\}  $ whenever
$i$ and $j$ are distinct.
\end{enumerate}
\end{definition}

In addition, if $M$ is $\left[  \left(  x_{i}\right)  _{i\in\lambda}%
,\alpha\right]  $-lineable for each set linearly independent $\left(
x_{i}\right)  _{i\in\lambda}$ in $V$, we say that $M$ has $\left(
\lambda,\alpha\right)  $ pointwise linear separation property\textbf{
}$(\left(  \lambda,\alpha\right)  $-\textbf{P.L.S.P }in short$)$. When $V$ is
a topological vector space and each subspace $W_{i}$ in $\mathcal{W}_{\alpha}$
can be chosen dense, we say that $M$ is $\left[  \left(  x_{i}\right)
_{i\in\lambda},\alpha\right]  $-dense lineable, and more generally, that $M$
has $\left(  \lambda,\alpha\right)  $-dense pointwise linear separation
property\textbf{ }$(\left(  \lambda,\alpha\right)  $-\textbf{D}%
.\textbf{P.L.S.P }in short$)$ if it is $\left[  \left(  x_{i}\right)
_{i\in\lambda},\alpha\right]  $-dense lineable for each linearly independent
set $\left(  x_{i}\right)  _{i\in\lambda}$ of elements of $V$ in $M$.

\bigskip

Unless otherwise specified, throughout this paper $V$ will represent a given
Hausdorff topological vector space. Furthermore we will say that a subspace
$Z$ is \emph{transversal} to another subspace $W$ in $V$ whenever $Z\cap
W=\{0\}$. The letters $\alpha,\beta,\lambda$ will always represent cardinal
numbers, $\operatorname*{card}\left(  M\right)  $ will denotes the cardinality
of the set $M$, $\aleph_{0}:=\operatorname*{card}\left(  \mathbb{N}\right)  $
and $\mathfrak{c}:=\operatorname*{card}\left(  \mathbb{R}\right)  $ and
moreover, if $W$ is a subspace of $V$, the \textit{codimension} of $W$,
symbolically denoted by $\operatorname{codim}W$, will indicate the dimension
of the quotient space $V/W$.

\bigskip

We start by showing that pointwise $\alpha$-lineability does not imply having
$\left(  \alpha,\alpha\right)  $-\textbf{P.L.S.P}.

\begin{example}
Consider the following subset of $\ell_{1}$:%
\[
M:=\operatorname*{span}\left\{  e_{1},e_{2}\right\}  \cup\left\{  \left(
x_{n}\right)  _{n=1}^{\infty}\in\ell_{1}:x_{1}=x_{2}=0\right\}  \text{.}%
\]
The set $M$ is pointwise $2$-lineable, but no satisfy $\left(  2,2\right)
$-\textbf{P.L.S.P}.
\end{example}

Furthermore, if we consider the usual sequence $\left(  e_{n}\right)
_{n=1}^{\infty}$ in $\ell_{\infty}$, where $e_{n}=$ $(0,0,\ldots
0,1,0,0,\ldots)$ (with the $1$ at the nth place) we can conclude that
$M:=\operatorname*{span}\left\{  e_{n}:n\in\mathbb{N}\right\}  \setminus
\left\{  0\right\}  $ is $\left[  \left(  x_{i}\right)  _{i\in\aleph_{0}%
},\aleph_{0}\right]  $-lineable for some set $\left(  x_{i}\right)
_{i\in\aleph_{0}}$.\textbf{ }In fact, let%
\[
\mathbb{N=}\bigcup\limits_{k=1}\mathbb{N}_{k}%
\]
with $\mathbb{N}_{i}\cap\mathbb{N}_{j}=\varnothing$ whenever $i\neq j$ and
$\operatorname{card}\left(  \mathbb{N}_{k}\right)  =\aleph_{0}$ for all $k$.
Denote%
\[
\mathbb{N}_{k}=\left\{  n_{j}^{\left(  k\right)  }:j\in\mathbb{N}\right\}
\]
with $n_{i}^{\left(  k\right)  }<n_{j}^{\left(  k\right)  }$whenever $i<j$ and
define%
\[
E_{k}:=\operatorname*{span}\left\{  \left\{  e_{n_{k}^{\left(  1\right)  }%
}\right\}  \cup\left\{  e_{n}:n\in\mathbb{N}_{k+1}\right\}  \right\}  \text{
for each }k\text{.}%
\]
It is plain that $M:=\operatorname*{span}\left\{  e_{n}:n\in\mathbb{N}%
\right\}  \setminus\left\{  0\right\}  $ is $\left[  \left(  x_{i}\right)
_{i\in\aleph_{0}},\aleph_{0}\right]  $-lineable $($if we take $x_{i}:=e_{i}$
for each $i\in\mathbb{N}_{1})$. However, $M$ does not have $\left(  \aleph
_{0},\aleph_{0}\right)  $-\textbf{P.L.S.P}.

\bigskip

The result below is due to F\'{a}varo et al. in \cite{Raposo} and is inspired
by \cite{Leo}. This will be essential to prove Theorem
\ref{Primeiro resultado}.

\begin{theorem}
\label{Pams} \cite[Theorem 4.2]{Raposo} Let $V\neq\left\{  0\right\}  $ and
$W\subset V$ be a linear subspace such that $w(V)\leq\dim\left(  V/W\right)
$. Then $V\setminus W$ is $\left(  \alpha,\beta\right)  $-dense lineable for
each $\alpha<\dim\left(  V/W\right)  $ and%
\[
\max\left\{  \alpha,w\left(  V\right)  \right\}  \leq\beta\leq\dim\left(
V/W\right)  .
\]

\end{theorem}

\begin{theorem}
\label{Primeiro resultado}Let $V\neq\left\{  0\right\}  $ and let
$\alpha>w\left(  V\right)  \geq\aleph_{0}$ be a cardinal number. Let $M$ be a
nonempty subset of $V$. If $M$ is pointwise $\alpha$-dense lineable then $M$
has $\left(  2,w\left(  V\right)  \right)  $-\textbf{D}.\textbf{P.L.S.P}.

\begin{proof}
Let $x,y\in M$ be two linearly independent vectors of $V$. Since $M$ is
pointwise $\alpha$-dense lineable, there is an $\alpha$-dimensional vector
subspace $W_{x}$ dense in $V$ such that%
\[
x\in W_{x}\text{ \ \ \ and \ \ \ }W_{x}\subset M\cup\left\{  0\right\}
\text{.}%
\]
If $y\in W_{x}$ consider a Hamel basis $\left\{  x_{a}:a\in I\right\}  $ to
$W_{x}$ containing $x$ and $y$. Let $I=I_{1}\cup I_{2}$ be a partition of $I$
into two subsets of cardinality $\alpha$ with $x=x_{a_{1}}$ for some $a_{1}\in
I_{1}$ and $y=x_{a_{2}}$ for some $a_{2}$ in $I_{2}$. Let%
\begin{equation}
W_{x,1}:=\operatorname*{span}\left\{  x_{a}:a\in I_{1}\right\}  \text{.}%
\label{U}%
\end{equation}
Thus, we get $\mathbb{K}y\cap W_{x,1}=\left\{  0\right\}  $ and since $\dim
W_{x}/W_{x,1}=\alpha>w\left(  V\right)  \geq w\left(  W_{x}\right)  $, we can
invoke Theorem \ref{Pams} to obtain a $w\left(  V\right)  $-dimensional vector
subspace $W_{y}$ dense in $W_{x}$ $($containing $\mathbb{K}y)$ such that
$W_{y}$ is transversal to $W_{x,1}$. That is,
\begin{equation}
W_{y}\cap W_{x,1}=\left\{  0\right\}  \text{.}\label{A}%
\end{equation}
The subspace $W_{x,1}$ defined in $(\ref{U})$ is not necessarily dense in
$W_{x}$. However, since $\dim W_{x}/W_{y}=\alpha>w\left(  V\right)  \geq
w\left(  W_{x}\right)  $ and $\mathbb{K}x\cap W_{y}=\left\{  0\right\}  $ we
can invoke Theorem \ref{Pams} again to obtain a $w\left(  V\right)
$-dimensional vector subspace $W_{x,2}$ dense in $W_{x}$ $($containing
$\mathbb{K}x)$ such that $W_{x,2}$ is transversal to $W_{y}$. By the fact that
$\left(  W_{x,2}\cup W_{y}\right)  \subset W_{x}\subset M\cup\left\{
0\right\}  $ and $W_{x}$ is dense in $V$, we conclude that the vector
subspaces $W_{x,2}$ and $W_{y}$ are both dense in $V$. Hence, the proof is
complete for the case where $y\in W_{x}$. Now, we will assume that%
\begin{equation}
y\notin W_{x}\text{.}\label{AA}%
\end{equation}
The fact that $M$ is pointwise $\alpha$-dense lineable ensures that there is
an $\alpha$-dimensional vector subspace $Z_{y}$ dense in $V$ such that%
\[
y\in Z_{y}\text{ \ \ \ and \ \ \ }Z_{y}\subset M\cup\left\{  0\right\}
\text{.}%
\]
Let $v\in W_{x}$ be such that $v\notin\mathbb{K}x$. Since $\dim W_{x}%
/\mathbb{K}v=\alpha>w\left(  V\right)  $ we can use Theorem \ref{Pams} again
to obtain a vector subspace $\mathcal{D}_{x}$ dense in $W_{x}$ such that%
\[
x\in\mathcal{D}_{x}\text{ \ \ \ and \ \ \ }\dim\mathcal{D}_{x}=w\left(
V\right)  \text{.}%
\]
If $\mathcal{D}_{x}\cap Z_{y}=\left\{  0\right\}  $, let $\mathcal{D}_{y}$ be
a vector subspace dense in $Z_{y}$ such that%
\[
y\in\mathcal{D}_{y}\text{ \ \ \ and \ \ \ }\dim\mathcal{D}_{y}=w\left(
V\right)  \text{.}%
\]
Hence%
\[
\mathcal{D}_{x}\cap\mathcal{D}_{y}=\left\{  0\right\}  \text{,}%
\]
and the result is done. Otherwise, if $\mathcal{D}_{x}\cap Z_{y}\neq\left\{
0\right\}  $, let $\mathcal{N}:=\mathcal{D}_{x}\cap Z_{y}$. Since
$y\notin\mathcal{N}$ $\left(  y\notin\mathcal{D}_{x}\right)  $ and
$\dim\mathcal{N}\leq\dim\mathcal{D}_{x}=w\left(  V\right)  <\alpha=\dim Z_{y}%
$, we can infer $($again by Theorem \ref{Pams}$)$ that there is a vector
subspace $\mathcal{D}_{1,y}$ dense in $Z_{y}$ such that%
\[
y\in\mathcal{D}_{1,y}\text{, \ \ }\dim\mathcal{D}_{1,y}=w\left(  V\right)
\text{ \ \ and \ \ }\mathcal{D}_{1,y}\cap\mathcal{N}=\left\{  0\right\}
\text{.}%
\]
In particular, we get%
\[
\mathcal{D}_{x}\cap\mathcal{D}_{1,y}=\left\{  0\right\}  \text{.}%
\]
Thus, the proof is complete.
\end{proof}
\end{theorem}

\bigskip

The result above can also be stated as follows:

\begin{theorem}
Let $V\neq\left\{  0\right\}  $ and let $\alpha_{2}>\alpha_{1}\geq w\left(
V\right)  $ be cardinal numbers and $M$ be a nonempty subset of $V$. If $M$ is
pointwise $\alpha_{2}$-dense lineable then $M$ has $\left(  2,\alpha
_{1}\right)  $-\textbf{D}.\textbf{P.L.S.P}.
\end{theorem}

The next result in this section characterizes sets with $\left(
2,\alpha\right)  $-\textbf{P.L.S.P} in the context of topological vector spaces.

\begin{theorem}
\label{First corollary in DPSP}Let $V\neq\left\{  0\right\}  $ be a vector
space and let $\alpha$ be an infinite cardinal number. Let $M$ be a nonempty
subset of $V$. Then $M$ is pointwise $\alpha$-lineable if and only if it has
$\left(  2,\alpha\right)  $-\textbf{P.L.S.P}.
\end{theorem}

\begin{proof}
Let $x,y\in M$ be two linearly independent vectors of $V$. Since $M$ is
pointwise $\alpha$-lineable, there is an $\alpha$-dimensional vector subspace
$W$ in $V$ such that%
\[
x\in W\text{ \ \ \ and \ \ \ }W\subset M\cup\left\{  0\right\}  \text{.}%
\]
If $y\in W$ consider a Hamel basis $\left\{  x_{a}:a\in I\right\}  $ to $W$
containing $x$ and $y$ and let $I=I_{1}\cup I_{2}$ be a partition of $I$ into
two subsets of cardinality $\alpha$ with $x=x_{a_{1}}$ for some $a_{1}\in
I_{1}$ and $y=x_{a_{2}}$ for some $a_{2}$ in $I_{2}$. The vector subspaces
$\mathcal{V}_{x}:=\operatorname*{span}\left\{  x_{a}:a\in I_{1}\right\}  $ and
$\mathcal{V}_{y}:=\operatorname*{span}\left\{  x_{a}:a\in I_{2}\right\}  $ are
such that%
\[
\mathcal{V}_{x}\cap\mathcal{V}_{y}=\left\{  0\right\}  \text{ \ \ and
\ \ }\dim\mathcal{V}_{x}=\dim\mathcal{V}_{y}=\alpha\text{.}%
\]
Furthermore,%
\[
\mathcal{V}_{x}\cup\mathcal{V}_{y}\subset W\subset M\cup\left\{  0\right\}
\text{.}%
\]

\end{proof}

\begin{corollary}
Let $V\neq\left\{  0\right\}  $ and let $\alpha\geq w\left(  V\right)  $ be an
infinite cardinal number. Let $M$ be a nonempty subset of $V$. If $M$ is
pointwise $\alpha$-lineable if and only if $M$ has $\left(  n,\alpha\right)
$-\textbf{P.L.S.P }for each\textbf{ }$n\in\mathbb{N}$.
\end{corollary}

\begin{corollary}
Let $V\neq\left\{  0\right\}  $ and let $\alpha\geq w\left(  V\right)  $ be an
infinite cardinal number. If $W\neq\left\{  0\right\}  $ is a proper vector
subspace of $V$ then%
\[
V\setminus W\text{ has }\left(  n,\operatorname{codim}W\right)
\text{-\textbf{P.L.S.P }if and only if }\operatorname{codim}W=\infty\text{.}%
\]

\end{corollary}

\section{Infinite pointwise lineability: An extended result\bigskip}

Still on the pointwise perspective, M. Caleder\'{o}n-Moreno, P. Gerlach-Mena
and J. Prado-Bassas present in \cite{MMB} the concepts of infinite pointwise
lineability and infinite dense pointwise lineability as follows:

\begin{definition}
Let $V$ be a vector space, $M$ a nonempty subset of $V$, and $\alpha$ an
infinite cardinal. We say that $M$ is infinitely pointwise-lineable if, for
every $x\in M$, there exists a family $\mathcal{W}=\left\{  W_{n}%
:n\in\mathbb{N}\right\}  $ of vector subspaces of $V$ such that:

\begin{enumerate}
\item[$\left(  i\right)  $] for each $n\in\mathbb{N}$, $\dim\left(
W_{n}\right)  =\alpha$,

\item[$\left(  ii\right)  $] for each $n\in\mathbb{N}$, $x\in W_{n}\subset
M\cup\left\{  0\right\}  $ and

\item[$\left(  iii\right)  $] $W_{m}\cap W_{n}=\mathbb{K}x$ whenever $m$ and
$n$ are distinct positive integers.
\end{enumerate}

\noindent When $V$ is endowed with a topology and $W_{n}$ is a dense subspace
of $V$ for each $n\in\mathbb{N}$, we say that $M$ is infinitely pointwise
$\alpha$-dense lineable in $V$.
\end{definition}

Evidently, these more restrictive notions of lineability presented here
recover ordinary concepts.

The authors verify in \cite{MMB} that, if $\alpha$ is an infinite cardinal,
then the notions of pointwise $\alpha$-lineability and infinite pointwise
$\alpha$-lineability are equivalent. However, the same cannot be said of the
notions of pointwise $\alpha$-dense lineability and infinite pointwise
$\alpha$-dense lineability. In their main result, they state that

\begin{theorem}
\label{Teo1.1}(See \cite[Theorem 2.3]{MMB}) Let $V$ be a metrizable separable
topological vector space, and $\alpha$ be an infinite cardinal number, and $M$
be a nonempty subset of $V$ for which there is a nonempty subset $N$ of $V$
such that

\begin{enumerate}
\item[$\left(  i\right)  $] $M$ is stronger than $N$;

\item[$\left(  ii\right)  $] $M\cap N=\varnothing$;

\item[$\left(  iii\right)  $] $N$ is dense-lineable.
\end{enumerate}

\noindent If $M$ is pointwise $\alpha$-lineable, then $M$ is infinite
pointwise $\alpha$-dense lineable (and therefore pointwise $\alpha$-dense lineable).
\end{theorem}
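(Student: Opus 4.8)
The plan is to construct, for an arbitrary fixed $x\in M$, a countable family $\{Z_k:k\in\mathbb N\}$ of dense $\alpha$-dimensional subspaces of $V$, each contained in $M\cup\{0\}$, each containing $x$, and satisfying $Z_k\cap Z_l=\mathbb K x$ for $k\neq l$; this is precisely infinite pointwise $\alpha$-dense lineability. Recall that ``$M$ is stronger than $N$'' means $M+N\subseteq M$. First I would exploit hypothesis $(iii)$: let $Y\subseteq N\cup\{0\}$ be a dense subspace. Since $V$ is separable and metrizable, $Y$ is separable and infinite dimensional, so a standard inductive selection (density of $Y$ lets me approximate a fixed countable dense subset of $V$, while infinite dimensionality of $Y$ lets me step off the finite-dimensional span accumulated so far) produces a linearly independent family $\{y^{(k)}_n:k,n\in\mathbb N\}\subseteq N$ for which each row $\{y^{(k)}_n\}_{n\in\mathbb N}$ is dense in $V$. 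From pointwise $\alpha$-lineability at $x$ I take $W_x\subseteq M\cup\{0\}$ with $x\in W_x$ and $\dim W_x=\alpha$; using $\alpha=\alpha\cdot\aleph_0$, I fix a Hamel basis of $W_x$ of the form $\{x\}\cup\bigcup_k\mathcal B_k$ with the $\mathcal B_k$ pairwise disjoint of cardinality $\alpha$, and inside each $\mathcal B_k$ I reserve a sequence $\{w^{(k)}_n\}_n$ together with a remainder $\{c^{(k)}_j\}_{j\in J_k}$. Rescaling the reserved vectors (which preserves the basis property) I may assume $w^{(k)}_n\to 0$ in $V$ as $n\to\infty$.

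Then I set $Z_k:=\operatorname{span}\big(\{x\}\cup\{y^{(k)}_n+w^{(k)}_n:n\in\mathbb N\}\cup\{c^{(k)}_j:j\in J_k\}\big)$ and verify four properties. The equality $W_x\cap Y=\{0\}$, forced by $M\cap N=\varnothing$ together with $W_x\setminus\{0\}\subseteq M$ and $Y\setminus\{0\}\subseteq N$, makes the whole family $\{x\}\cup\{w^{(k)}_n\}\cup\{c^{(k)}_j\}\cup\{y^{(k)}_n\}$ linearly independent, which at once gives $x\in Z_k$ and $\dim Z_k=\alpha$. The crux, and the step I expect to be the main obstacle, is to obtain density simultaneously with the inclusion $Z_k\setminus\{0\}\subseteq M$: these pull in opposite directions, because density wants $Z_k$ to recapture the dense set $Y$, yet every nonzero vector of $Y$ lies in $N$ and is therefore forbidden from $M$. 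The resolution is the twist $y^{(k)}_n+w^{(k)}_n$ with $w^{(k)}_n\to 0$: given $v\in V$, density of the row $\{y^{(k)}_n\}_n$ yields indices $n_j\to\infty$ with $y^{(k)}_{n_j}\to v$, and since $w^{(k)}_{n_j}\to 0$, the vectors $y^{(k)}_{n_j}+w^{(k)}_{n_j}\in Z_k$ converge to $v$ (computed in a translation-invariant metric for $V$, available by metrizability); hence $Z_k$ is dense.

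For the inclusion I write a nonzero $z\in Z_k$ as $z=\omega+\eta$ with $\omega\in W_x$ collecting the $x$-, $w$- and $c$-parts and $\eta=\sum_n\lambda_n y^{(k)}_n\in Y$. If $\eta=0$ then $z=\omega\in W_x\setminus\{0\}\subseteq M$; if $\eta\neq 0$ then some $\lambda_n\neq 0$, which by independence forces $\omega\neq 0$, so $\omega\in M$, $\eta\in N$, and $z=\omega+\eta\in M+N\subseteq M$. Thus $Z_k\setminus\{0\}\subseteq M$. Finally, for $k\neq l$ the internal direct sum $W_x\oplus Y$ (valid since $W_x\cap Y=\{0\}$) shows that any $z\in Z_k\cap Z_l$ has its $W_x$-part in $\operatorname{span}(\{x\}\cup\mathcal B_k)\cap\operatorname{span}(\{x\}\cup\mathcal B_l)=\mathbb K x$ and its $Y$-part in $\operatorname{span}\{y^{(k)}_n\}_n\cap\operatorname{span}\{y^{(l)}_n\}_n=\{0\}$, where the joint independence of the rows is exactly what is needed; hence $z\in\mathbb K x$ and $Z_k\cap Z_l=\mathbb K x$. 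This produces the required family $\{Z_k\}$ and completes the argument.
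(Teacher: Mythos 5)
Your proof is correct, but it reaches the statement by a different technical route than the paper. The paper never proves Theorem \ref{Teo1.1} directly: it proves the more general Theorem \ref{Teo2.1} (arbitrary topological vector space, $\alpha\geq w(V)$) and then observes that metrizable separable spaces have $w(V)=\aleph_{0}$. In that proof, density is achieved topologically rather than sequentially: for every basic open set $U_{i}$ one fixes a vector $v_{i}\in\left(  N\cup\{0\}\right)  \cap U_{i}$ with $\operatorname*{span}\{v_{i}:i\in I\}\subseteq N\cup\{0\}$, and each Hamel basis vector $w_{j}$ of the pointwise witness $W$ is scaled so that $\varepsilon_{j}w_{j}+v_{\sigma_{n}(j)}$ lands \emph{inside} $U_{\sigma_{n}(j)}$; each subspace $D_{n}$ is then dense simply because it meets every basic open set. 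You instead achieve density sequentially: you extract countably many pairwise disjoint, jointly independent dense rows $\{y_{n}^{(k)}\}_{n}$ from a dense subspace $Y\subseteq N\cup\{0\}$ (an inductive selection that needs a countable dense subset of $V$ and a metric), and you rescale the reserved basis vectors of $W_{x}$ into a null sequence so that $y_{n}^{(k)}+w_{n}^{(k)}$ accumulates at every point of $V$. The algebraic core is the same in both arguments: perturb a countably partitioned Hamel basis of $W_{x}$ by vectors of $N$, use ``stronger'' together with $M\cap N=\varnothing$ both to force the spans into $M\cup\{0\}$ and to get linear independence, and use the decomposition $W_{x}\oplus Y$ (resp.\ joint independence of the $w$'s and $v$'s) to get $Z_{k}\cap Z_{l}=\mathbb{K}x$. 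The trade-off is clear: your route is elementary and essentially reconstructs the original argument of \cite{MMB}, but it genuinely uses metrizability (null sequences, sequential density) and separability (countable dense set), so it cannot be upgraded to Theorem \ref{Teo2.1}; the paper's basic-open-set mechanism is precisely what eliminates those two hypotheses, which is the whole point of the note. Two routine details you should spell out: the indices $n_{j}$ in your density argument can indeed be taken to tend to infinity, because a nontrivial metrizable topological vector space has no isolated points, so every ball contains infinitely many of the (distinct, by independence) row elements; and $0\notin M$ (otherwise $N=0+N\subseteq M$ would contradict hypothesis $(ii)$), so $x\neq0$ and your normalization of the basis as $\{x\}\cup\bigcup_{k}\mathcal{B}_{k}$ is legitimate, while $Y$ is automatically infinite-dimensional since $\dim V\geq\alpha$ and finite-dimensional subspaces of a Hausdorff topological vector space are closed.
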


In this section we extend the above result by removing the assumptions of
separability and metrizability of the vector space $V$.

\section{Main result}

Under the conditions of Theorem \ref{Teo1.1}, $w\left(  V\right)  =\aleph_{0}$
and thus, if $\alpha$ is an infinite cardinal, then $\alpha\geq\aleph
_{0}=w\left(  V\right)  $. Therefore, if we remove the metrizability and
separability assumptions of $V$ and instead require that $V$ be a topological
vector space and that the infinite cardinal $\alpha$ satisfies the condition
$\alpha\geq w\left(  V\right)  $ we will have an extension of Theorem
\ref{Teo1.1}. This is what we do in Theorem \ref{Teo2.1}.

\begin{theorem}
\label{Teo2.1}Let $V$ be a topological vector space and let $\alpha\geq
w\left(  V\right)  $ be an infinite cardinal. Let $M$ be a nonempty subset of
$V$ for which there is a nonempty subset $N$ of $V$ such that

\begin{enumerate}
\item[$\left(  i\right)  $] $M$ is stronger than $N$;

\item[$\left(  ii\right)  $] $M\cap N=\varnothing$;

\item[$\left(  iii\right)  $] $N$ is dense lineable in $V$.
\end{enumerate}

\noindent If $M$ is pointwise $\alpha$-lineable, then $M$ is infinite
pointwise $\alpha$-dense lineable.
\end{theorem}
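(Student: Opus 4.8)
The plan is to fix an arbitrary $x\in M$ and produce, inside $M\cup\{0\}$, countably many dense subspaces of dimension $\alpha$ passing through $x$ that pairwise meet only along $\mathbb{K}x$. From pointwise $\alpha$-lineability I would choose a subspace $W_{x}\subseteq M\cup\{0\}$ with $x\in W_{x}$ and $\dim W_{x}=\alpha$, and from hypothesis (iii) a dense subspace $Y\subseteq N\cup\{0\}$. Disjointness (ii) forces $W_{x}\cap Y=\{0\}$, so the sum $W_{x}+Y$ is direct; this is the only interaction between the algebraic part (living in $M$, supplying dimension) and the topological part (living in $N$, supplying density). Since $\alpha$ is infinite I would then split off $x$ and decompose a complement into countably many pieces, $W_{x}=\mathbb{K}x\oplus\bigoplus_{n\in\mathbb{N}}W^{(n)}$ with $\dim W^{(n)}=\alpha$ for every $n$; the summand $W^{(n)}$ will be the private resource for the $n$-th subspace, and its independence from the others is what ultimately forces the pairwise intersections down to $\mathbb{K}x$.

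The heart of the argument is the density construction, performed separately inside each $W^{(n)}$. I would fix a basis $\mathcal{B}$ for the topology of $V$ with $\operatorname{card}(\mathcal{B})=w(V)\le\alpha$, and for each nonempty $U\in\mathcal{B}$ pick $y_{U}\in Y\cap U$ (possible since $Y$ is dense). Then $U-y_{U}$ is a neighborhood of $0$, and I would select, by transfinite recursion along a well-ordering of $\mathcal{B}$, nonzero vectors $w_{U}^{(n)}\in W^{(n)}\cap(U-y_{U})$ so that $\{x\}\cup\{w_{U}^{(n)}:U\in\mathcal{B}\}$ is linearly independent. Each step succeeds because a $0$-neighborhood of $W^{(n)}$ is absorbing, hence cannot lie in the trace on $W^{(n)}$ of the span of the previously chosen vectors, a subspace of dimension $<\alpha=\dim W^{(n)}$. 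Writing $z_{U}^{(n)}:=y_{U}+w_{U}^{(n)}$, I define $W_{n}:=\operatorname{span}\bigl(\{x\}\cup\{z_{U}^{(n)}:U\in\mathcal{B}\}\bigr)$, enlarged by linearly independent padding vectors from $W^{(n)}$ so that $\dim W_{n}=\alpha$.

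Each $W_{n}$ then needs three checks. Density is immediate, since $z_{U}^{(n)}=y_{U}+w_{U}^{(n)}\in U$ for every basic $U$, so $W_{n}$ meets every nonempty open set. Containment $W_{n}\subseteq M\cup\{0\}$ follows from the case analysis that writes a generic element as $w+y$ with $w\in W_{x}$ and $y\in Y$: if $w\neq0$ then $w\in M$, and either $y=0$, giving the element $w\in M$, or $y\in N$ and $w+y\in M$ because $M$ is stronger than $N$; whereas $w=0$ forces, by the engineered independence, all coefficients to vanish, hence $y=0$ and the element is $0$. That $x\in W_{n}$ and $\dim W_{n}=\alpha$ are built in. For $j\neq k$ the intersection is computed through the direct sum: any $z\in W_{j}\cap W_{k}$ has unique $W_{x}$- and $Y$-components, and equating the two representations places the $W_{x}$-component in $(\mathbb{K}x\oplus W^{(j)})\cap(\mathbb{K}x\oplus W^{(k)})=\mathbb{K}x$, after which the independence of the $w_{U}^{(j)}$ annihilates the remaining coefficients and leaves $z\in\mathbb{K}x$.

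I expect the density step to be the main obstacle, and it is exactly where $\alpha\ge w(V)$ takes over the role of separability and metrizability. In the metrizable separable case one approximates each point of a countable dense set by perturbing along a sequence of fresh directions tending to $0$, using a first-countable base at $0$; here no such base exists, so instead I index the approximation by a full topological basis of cardinality $w(V)$ and spend the budget $\dim W^{(n)}=\alpha\ge w(V)$ to produce, via transfinite recursion and absorption, linearly independent small vectors $w_{U}^{(n)}$. The delicate requirement is to keep these vectors simultaneously inside the prescribed neighborhoods $U-y_{U}$, globally independent together with $x$, and confined to the correct summand $W^{(n)}$; it is the last two conditions together that secure both $W_{n}\subseteq M\cup\{0\}$ and the exact pairwise intersection $\mathbb{K}x$.
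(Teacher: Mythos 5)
Your proposal is correct and follows essentially the same route as the paper's proof: perturb an $\alpha$-dimensional witness subspace $W_{x}\subset M\cup\{0\}$, split into countably many $\alpha$-dimensional pieces, by vectors of $N$ chosen in each member of a topological basis of cardinality $w(V)$, deducing density from the perturbed generators lying in the basic open sets, containment in $M\cup\{0\}$ from ``stronger,'' and pairwise intersections equal to $\mathbb{K}x$ from $M\cap N=\varnothing$. The only real difference is bookkeeping: the paper pre-assigns to each basic open set $U_{i}$ a distinct Hamel basis vector $w_{j}$ of the $n$-th piece and merely rescales it (so that $\varepsilon_{j}w_{j}+v_{i}\in U_{i}$), making linear independence of the generators automatic, whereas you enforce independence by transfinite recursion with the absorption argument---which is fine, provided you well-order $\mathcal{B}$ with minimal order type so that at every stage the previously chosen vectors span a subspace of dimension strictly less than $\alpha$.
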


\begin{proof}
Let $\left\{  U_{i}:i\in I\right\}  $ be a basis for the topology of $V$ with
$\operatorname{card}I=w\left(  V\right)  $. Since $N$ is dense lineable, for
each $i\in I$, we can choose $v_{i}\in\left(  N\cup\left\{  0\right\}
\right)  \cap U_{i}$ such that%
\[
\operatorname*{span}\left\{  v_{i}:i\in I\right\}  \subset N\cup\left\{
0\right\}  \text{.}%
\]
Without loss of generality we can assume that $0=v_{i_{0}}$ for some $i_{0}\in
I$. Since $M$ is pointwise $\alpha$-lineable, given $x\in M$, there is a
subspace $W\subset M\cup\left\{  0\right\}  $ such that%
\[
x\in W\text{ \ \ \ \ and \ \ \ \ }\dim\left(  W\right)  =\alpha\text{.}%
\]
Let $\left\{  w_{j}:j\in J\right\}  $ be a Hamel basis of $W$. If $x\neq0$, we
can assume that $x=w_{j_{0}}$ for some $j_{0}\in J$. Let $\left\{  J_{n}%
:n\in\mathbb{N}\right\}  $ be an enumerable partition of $J$ into subsets of
cardinality $\alpha$. Let us define%
\[
L_{n}=\left\{
\begin{array}
[c]{ll}%
J_{n}\text{,} & \text{if }x=0\text{,}\\
J_{n}\cup\left\{  j_{0}\right\}  \text{,} & \text{if }x\neq0\text{.}%
\end{array}
\right.
\]
and consider $I_{n}\subset L_{n}\setminus\left\{  j_{0}\right\}  $ such that
$\operatorname*{card}\left(  I_{n}\right)  =w\left(  V\right)  $ and let
$\sigma_{n}\colon I_{n}\rightarrow I$ a bijection. For each $i\in I$ let%
\[
-v_{i}+U_{i}=\left\{  -v_{i}+v:v\in U_{i}\right\}  \text{.}%
\]
Hence, for each $i\in I$, $-v_{i}+U_{i}$ is a neighbourhood of the origin and,
therefore, it follows from the continuity of scalar multiplication that, for
each $j\in I_{n}$, there is $\varepsilon_{j}>0$ such that%
\[
\varepsilon_{j}w_{j}\in-v_{\sigma_{n}\left(  j\right)  }+U_{\sigma_{n}\left(
j\right)  }%
\]
that is, such that%
\[
\varepsilon_{j}w_{j}+v_{\sigma_{n}\left(  j\right)  }\in U_{\sigma_{n}\left(
j\right)  }\text{.}%
\]
If $j\in L_{n}\setminus I_{n}$ we consider $\varepsilon_{j}=1$. For each
$n\in\mathbb{N}$, letting $\pi_{n}\colon L_{n}\rightarrow I$ be the function
defined by%
\[
\pi_{n}\left(  j\right)  =\left\{
\begin{array}
[c]{ll}%
\sigma_{n}\left(  j\right)  \text{,} & \text{if }j\in I_{n}\text{,}\\
i_{0}\text{,} & \text{otherwise,}%
\end{array}
\right.
\]
let us define
\[
D_{n}=\operatorname*{span}\left\{  \varepsilon_{j}w_{j}+v_{\pi_{n}\left(
j\right)  }:j\in L_{n}\right\}
\]
It is obvious that $D_{n}$ is a dense subspace of $V$ and $x\in D_{n}$. Let
$\alpha_{1},\ldots,\alpha_{m}$ be scalars not all null and $j_{1},\ldots
,j_{m}\in L_{n}$. Assume that%
\[%
{\textstyle\sum\limits_{k=1}^{m}}
\alpha_{k}\left(  \varepsilon_{j_{k}}w_{j_{k}}+v_{\pi_{n}\left(  j_{k}\right)
}\right)  =0\text{.}%
\]
In this case, the linear independence of the vectors $w_{j_{k}}$ assures that%
\[
0\neq%
{\textstyle\sum\limits_{k=1}^{m}}
\alpha_{k}\varepsilon_{j_{k}}w_{j_{k}}=-%
{\textstyle\sum\limits_{k=1}^{m}}
\alpha_{k}v_{\pi_{n}\left(  j_{k}\right)  }\in M\cap N
\]
and this is a contradiction. Thus,%
\[%
{\textstyle\sum\limits_{k=1}^{m}}
\alpha_{k}\left(  \varepsilon_{j_{k}}w_{j_{k}}+v_{\pi_{n}\left(  j_{k}\right)
}\right)  \neq0
\]
and $\left\{  \varepsilon_{j}w_{j}+v_{\pi_{n}\left(  j\right)  }:j\in
L_{n}\right\}  $ is linearly independent. It follows that%
\[
\dim\left(  D_{n}\right)  =\operatorname*{card}\left(  L_{n}\right)
=\alpha\text{.}%
\]
If $v\in D_{n}\setminus\left\{  0\right\}  $ then there are non-null scalars
$\lambda_{1},\ldots,\lambda_{m}$ and $j_{1},\ldots,j_{m}\in L_{n}$, such that%
\[
v=%
{\textstyle\sum\limits_{k=1}^{m}}
\lambda_{k}\left(  \varepsilon_{j_{k}}w_{j_{k}}+v_{\pi_{n}\left(
j_{k}\right)  }\right)  =%
{\textstyle\sum\limits_{k=1}^{m}}
\lambda_{k}\varepsilon_{j_{k}}w_{j_{k}}+%
{\textstyle\sum\limits_{k=1}^{m}}
\lambda_{k}v_{\pi_{n}\left(  j_{k}\right)  }\text{.}%
\]
Obviously,
\[%
{\textstyle\sum\limits_{k=1}^{m}}
\lambda_{k}\varepsilon_{j_{k}}w_{j_{k}}\in M\text{ \ \ \ \ and \ \ \ \ }%
{\textstyle\sum\limits_{k=1}^{m}}
\lambda_{k}v_{\pi_{n}\left(  j_{k}\right)  }\in N\cup\left\{  0\right\}
\text{.}%
\]
Consequently,%
\[
v\in M+\left(  N\cup\left\{  0\right\}  \right)  \subset M\text{,}%
\]
and this proves that $D_{n}\subset M\cup\left\{  0\right\}  $. It only remains
to prove that, $D_{m}\cap D_{n}=\mathbb{K}x$ whenever $m$ and $n$ are
different positive integers. Let $m,n\in\mathbb{N}$, with $m\neq n$, and let
$v\in D_{m}\cap D_{n}$. Therefore, there are $\mu_{1},\ldots,\mu_{r+s}%
,\alpha,\beta\in\mathbb{K}$, $j_{1},\ldots,j_{r}\in L_{m}$, and $j_{r+1}%
,\ldots,j_{r+s}\in L_{n}$ such that%
\[%
{\textstyle\sum\limits_{k=1}^{r}}
\mu_{k}\left(  \varepsilon_{j_{k}}w_{j_{k}}+v_{\pi_{m}\left(  j_{k}\right)
}\right)  +\alpha x=v=%
{\textstyle\sum\limits_{k=r+1}^{r+s}}
\mu_{k}\left(  \varepsilon_{j_{k}}w_{j_{k}}+v_{\pi_{n}\left(  j_{k}\right)
}\right)  +\beta x
\]
Hence, making%
\[
\eta_{k}=\left\{
\begin{array}
[c]{ll}%
\mu_{k}\text{,} & \text{if }1\leq k\leq r\text{,}\\
-\mu_{k}\text{,} & \text{if }r+1\leq k\leq r+s\text{,}%
\end{array}
\right.
\]
and defining $\pi\colon J\rightarrow I$ by $\pi\left(  j\right)  =\pi
_{n}\left(  j\right)  $ if $j\in L_{n}$, we have%
\[%
{\textstyle\sum\limits_{k=1}^{r+s}}
\eta_{k}\varepsilon_{j_{k}}w_{j_{k}}+\left(  \alpha-\beta\right)  x=-%
{\textstyle\sum\limits_{k=1}^{r+s}}
\eta_{k}v_{\pi\left(  j_{k}\right)  }\text{.}%
\]
If it were%
\[%
{\textstyle\sum\limits_{k=1}^{r+s}}
\eta_{k}\varepsilon_{j_{k}}w_{j_{k}}+\left(  \alpha-\beta\right)  x=-%
{\textstyle\sum\limits_{k=1}^{r+s}}
\eta_{k}v_{\pi\left(  j_{k}\right)  }\neq0\text{,}%
\]
then we would have%
\[
-%
{\textstyle\sum\limits_{k=1}^{r+s}}
\eta_{k}v_{\pi\left(  j_{k}\right)  }\in M\cap N
\]
which would be a contradiction. Thus,%
\[%
{\textstyle\sum\limits_{k=1}^{r+s}}
\eta_{k}\varepsilon_{j_{k}}w_{j_{k}}+\left(  \alpha-\beta\right)  x=0
\]
and since the vectors involved are linearly independent, we have%
\[
\eta_{1}=\cdots=\eta_{r+s}=\alpha-\beta=0\text{.}%
\]
This shows that $v=\alpha x\in\mathbb{K}x$ and the proof is done.
\end{proof}

\begin{corollary}
Let $V\neq\left\{  0\right\}  $ and $W$ be a non-trivial dense linear subspace
of $V$ with $\operatorname{codim}W=\infty$. If $w(V)\leq\operatorname{codim}%
W$\ then $V\setminus W$ is infinite pointwise $\operatorname{codim}W$-dense lineable.
\end{corollary}

\begin{proof}
Considering $M=V\setminus W$ and $N=W$, we have $M+N\subset M$ and $M\cap
N=\varnothing$ and the result follows by Theorem \ref{Teo2.1}.
\end{proof}

\begin{corollary}
Let $V\neq\left\{  0\right\}  $ and let $\alpha>w\left(  V\right)  $ be an
infinite cardinal. Let $M$ be a nonempty subset of $V$ for which there is a
nonempty subset $N$ of $V$ such that

\begin{enumerate}
\item[$\left(  i\right)  $] $M$ is stronger than $N$;

\item[$\left(  ii\right)  $] $M\cap N=\varnothing$;

\item[$\left(  iii\right)  $] $N$ is dense lineable in $V$.
\end{enumerate}

\noindent If $M$ is pointwise $\alpha$-lineable, then $M$ has $\left(
2,w\left(  V\right)  \right)  $-\textbf{D}.\textbf{P.L.S.P} for each
$n\in\mathbb{N}$.
\end{corollary}

\begin{proof}
Assume that $M$ is pointwise $\alpha$-lineable. By Theorem \ref{Teo2.1} we can
infer that $M$ is pointwise $\alpha$-dense lineable. Hence, we can invoke
Corollary \ref{First corollary in DPSP} to obtain that $M$ has $\left(
2,w\left(  V\right)  \right)  $-\textbf{D}.\textbf{P.L.S.P}.
\end{proof}


\begin{thebibliography}{99}                                                                                               %


\bibitem {Araujo}G. Ara\'{u}jo and A. Barbosa, A general lineability criterion
for complements of vector spaces, Rev. Real Acad. Cienc. Exactas Fis. Nat.
Ser. A-Mat. 118 (2024), no. 5.

\bibitem {AGPS}R.M. Aron, L. Bernal-Gonz\'{a}lez, D. Pellegrino, J.B.
Seoane-Sep\'{u}lveda, \textit{Lineability: The Search for Linearity in
Mathematics}. Monographs and Research Notes in Mathematics. CRC Press, Boca
Raton (2016).

\bibitem {AGS}R.M. Aron, V.I. Gurariy, J.B. Seoane-Sep\'{u}lveda,
\textit{Lineability and spaceability of sets of functions on }$\mathbb{R}$.
Proc. Am. Math. Soc. \textbf{133} (2005), 795--803.

\bibitem {FPT}V.V. F\'{a}varo, D. Pellegrino, D. Tomaz, \textit{Lineability
and spaceability: a new approach}, Bull. Braz. Math. Soc. \textbf{51} (2020), 27--46.

\bibitem {Raposo}V.V. F\'{a}varo, D. Pellegrino, A.B. Raposo J\'{u}nior, G.S.
Ribeiro, General criteria for a stronger notion of lineability, Proc. Amer.
Math. Soc., to appear.

\bibitem {Gurariy}V.I. Gurariy, \textit{Subspaces and bases in spaces of
continuous functions}, Dokl. Akad. Nauk SSSR \textbf{167} (1966), 971--973 (Russian).

\bibitem {MMB}M.C. Caleder\'{o}n-Moreno, P.J. Gerlach-Mena, J.A. Prado-Bassas,
\textit{Infinite pointwise lineability: general criteria and applications}.
RACSAM \textbf{118 }(1) (2023), Paper No. 25, 13 pp.

\bibitem {Piazza}L. Rodr\'{\i}guez-Piazza, \textit{Every Separable Banach
space is isometric to a space of continuous nowhere differentiable functions}.
Proc. Amer. Math. Soc. \textbf{123} (1995), no. 12, 3649--3654.

\bibitem {Leo}P. Leonetti, T. Russo, J. Somaglia, Dense lineability and
spaceability in certain subsets of $\ell_{\infty}$, Bull. Lond. Math. Soc. 55
(2023), 1--21.

\bibitem {Pointwise}D. Pellegrino and A. Raposo Jr, Pointwise lineability in
sequence spaces. Indag. Math. (N.S.) 32 (2021), 536--546.
\end{thebibliography}
\end{document}